\documentclass[12pt, reqno]{amsart}
\usepackage[utf8]{inputenc}
\usepackage{graphicx}
\usepackage{amsmath}% amstex ?
\usepackage{amsthm}
\usepackage{amssymb,bbm}%
\usepackage[numbers, square]{natbib}

%Theorems***************************************
\theoremstyle{plain}
\newtheorem{theorem}{Theorem}[section]
\newtheorem{lemma}[theorem]{Lemma}
\newtheorem{corollary}[theorem]{Corollary}

\theoremstyle{definition}

\theoremstyle{remark}

\makeindex
\makeglossary

\newcommand{\Sd}{\mathbb{S}^{d-1}}

\newcommand{\E}{\mathbb E\,}
\newcommand{\R}{\mathbb{R}}

\newcommand{\B}{\mathbb{B}}

%\mathrm commands

\newcommand{\conv}{{\mathrm{conv}}}

\newcommand{\bx}{\mathbf{x}}

\newcommand{\dd}{{\,\rm d}}

\usepackage{xcolor}

\begin{document}

\author[T.~Moseeva]{Tatiana Moseeva}
\address{Tatiana Moseeva, Leonhard Euler International Mathematical Institute, Russia}
\email{polezina@yandex.ru}

\author[A.~Tarasov]{Alexander Tarasov}
\address{Alexander Tarasov, Saint Petersburg State University, Russia}
\email{science.tarasov@gmail.com}

\author[D.~Zaporozhets]{Dmitry Zaporozhets}
\address{Dmitry Zaporozhets, St.~Petersburg Department of Steklov Institute of Mathematics, Russia}
\email{zap1979@gmail.com}

\title[Random sections of spherical convex bodies]{Random sections of \\ spherical convex bodies}
\keywords{Crofton formula, mean distance, spherical Blaschke-Petkantschin formula, spherical integral geometry, spherical convex body, random chord}
\subjclass[2010]{primary: 60D05, 53C65; secondary: 52A55}

\begin{abstract}
Let $K\subset\mathbb S^{d-1}$ be a convex spherical body. Denote by $\Delta(K)$ the distance between two random points in $K$ and denote by $\sigma(K)$ the length of a random chord of $K$. We explicitly express the distribution of $\Delta(K)$ via the distribution of $\sigma(K)$.  From this  we find the density of distribution of $\Delta(K)$ when $K$ is a spherical cap. 
\end{abstract}

\maketitle

\section{Introduction}
For $d\in\mathbb N$ fix some $k\in\{1,\dots,d-1\}$ and denote by $G_{d,k}$ (respectively, $A_{d,k})$  the set of all linear (respectively, affine) $k$-planes  in $\R^d$ equipped with the unique Haar measure invariant with respect to rotations (respectively, rigid motions) normalized by
\begin{align*}
    \nu_{d,k}\left(\left\{L\in G_{d,k}\right\}\right)=1,
\end{align*}
respectively,
\begin{align*}
    \mu_{d,k}\left(\left\{E\in A_{d,k}:\,E\cap \B^d\ne\emptyset\right\}\right)=\kappa_{d-k},
\end{align*}
where is  the $k$-dimensional unit ball and $\kappa_k:=|\mathbb B^k|$. By $|\cdot|$ we denote the volume of the appropriate dimension, by which we understand the Lebesgue measure with respect to the affine hull of a set. 

When $k=1$, we deal with the set of lines. The seminal Crofton formula says that for any convex body (convex compact set with non-empty interior) $K$ we have
\begin{align*}
    \int_{A_{d,1}} |K\cap E|^{d+1}\,\nu_{d,1}(\mathrm{d}E)=\frac{d(d+1)}{2d\kappa_d}|K|^2.
\end{align*}
It has been obtained by Crofton~\cite{mC85} for $d=2$ and later extended by Hadwiger~\cite{hH52} to all $d$.
What's less well-known is the following generalization which  has been  independently obtained in~\cite[Eq.~(21)]{gC67} and~\cite[Eq.~(34)]{jK69}: for $p>-d$,
\begin{align*}
    \int\limits_{A_{d,1}}|K\cap E|^{d+p+1}\,\mu_{d,1}(\mathrm{d}E)=\frac{(d+p)(d+p+1)}{2d\kappa_d}\int\limits_{K^2}{|\bx_0-\bx_1|^p\,\dd\bx_0\dd\bx_1}.
\end{align*}
In probabilistic language, it establishes a connection between the moments of two random variables:
\begin{align*}
    \E \sigma^{d+p+1}=\frac{(d+p)(d+p+1)}{2\kappa_{d-1}}\cdot\frac{|K|^2}{|\partial K|}\E\Delta^p,
\end{align*}
where $\sigma=\sigma(K)$ denotes the length of the intersection of $K$ with the random line uniformly distributed among all lines from $A_{d,1}$ intersected $K$ and $\Delta=\Delta(K)$ denotes the distance between two random points independently and uniformly chosen in $K$. By $|\partial K|$ we denote the surface area of the boundary of $K$ (the $(d-1)$-dimensional Lebesgue measure).

Since a bounded random variable is defined by its moments, we conclude that the distribution of $\Delta$ is defined by the distribution of $\sigma$ (which was not obvious a priori). The explicit connection was derived in~\cite{AO16} for $d=2$ and in~\cite{tM19} for any $d$:
\begin{align}\label{1109}
    f_\Delta(t)=\frac{t^{d-1}}{|K|}\bigg(d\kappa_d-\kappa_{d-1}\frac{|\partial K|}{|K|}\int_0^t(1-F_\sigma(s))\dd s\bigg),
\end{align}
where $f_\Delta$ is the density function of the distribution of $\Delta$ and $F_\sigma$ is the distribution function of $\sigma$. To obtain this result, the authors used the polar coordinates for $d=2$ and the affine Blaschke--Petkantchin formula~\cite[Theorem~7.2.7]{SW08} in general case. 
%In section~\ref{1101} we suggest an alternative approach based on the characteristic functions.

\medskip

The goal of this paper to obtain a \emph{spherical} analogue of~\eqref{1109}. In the next section we first introduce some basic notation and facts form the spherical integral geometry and then formulate our main result.

\section{Main result}
Now we turn to spherical geometry. Since we are not going to return to the Euclidean case anymore, we will keep some notation for spherical counterparts.

Denote by $\Sd$ the $(d-1)$-dimensional unit sphere and let $\omega_d$ denote its $(d-1)$-dimensional Lebesgue measure: $\omega_d:=|\Sd|=d\kappa_d$.

Let $K\subset\Sd$ be  a spherical convex body, which means that it can be represented as $K=\Sd\cap C$, where $C$ is a line-free closed convex cone in $\R^d$.

Denote by $\Delta=\Delta(K)$ the spherical distance between two random points independently and uniformly chosen in $K$. Formally, $\Delta$ is defined as an angle between two lines independently and uniformly distributed among the lines from $G_{d,1}$ which intersect $K$.

Also define $\sigma=\sigma(K)$ to be the spherical length (1-dimensional Lebesgue measure) of the intersection of $K$ with the $2$-plane  uniformly distributed among $2$-planes form $G_{d,2}$ which intersect $K$.

Our main result is the following spherical version of~\eqref{1109}.

\begin{theorem}\label{1430}
For any spherical body $K\subset\Sd$, the density function of distribution of $\Delta(K)$ can be expressed in terms of the distribution function of $\sigma(K)$ as follows:
\begin{align*}
     f_\Delta(t)=\frac{(\sin t)^{d-2}}{|K|}\bigg(w_{d-1}-\frac{w_d}{2\pi}\kappa_{d-1}\frac{|\partial K|}{|K|}\int_0^t(1-F_\sigma(s))\dd s\bigg).
\end{align*}
\end{theorem}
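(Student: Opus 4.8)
The plan is to mimic the Euclidean argument from~\cite{tM19,AO16} but replace the affine Blaschke--Petkantschin formula by its spherical counterpart. The starting point is to compute $\P(\Delta > t)$ by integrating, over all pairs $(\bx_0,\bx_1)\in K^2$ at spherical distance at least $t$, and to slice this double integral according to the great circle (equivalently, the $2$-plane through the origin) spanned by $\bx_0$ and $\bx_1$. Concretely, I would write
\begin{align*}
    \P(\Delta>t)=\frac{1}{|K|^2}\int_K\int_K \ind\{\dist(\bx_0,\bx_1)>t\}\dd\bx_0\dd\bx_1,
\end{align*}
and apply the spherical Blaschke--Petkantschin formula (a Jacobian identity converting $\dd\bx_0\dd\bx_1$ into a $\nu_{d,2}(\dd L)$-integration over $2$-subspaces $L\in G_{d,2}$, times the integration of the two points over the great circle $\Sd\cap L$ with a weight that is a power of the sine of their angular distance). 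On each such circle the inner integral reduces to a one-dimensional integral of $(\sin\theta)^{d-2}$-type weight over the arc $K\cap L$, which is where the chord length $\sigma$ enters: the set of pairs on a fixed circle at distance exactly $s$ is governed by the arc-length $|K\cap L|$, whose distribution (for the uniformly random $2$-plane, suitably reweighted) is exactly that of $\sigma$.

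The key bookkeeping step is to identify the correct normalizations. The random $2$-plane defining $\sigma$ is \emph{not} uniform on $G_{d,2}$ but uniform among those meeting $K$, and the Blaschke--Petkantschin weight introduces a density proportional to $|K\cap L|$ relative to plain Haar measure on $G_{d,2}$; one must also track the constants $w_{d-1},w_d$ (the relevant intrinsic-volume-type or flag-measure constants from spherical integral geometry) and the factor $\kappa_{d-1}|\partial K|/|K|$ coming from comparing the $2$-plane measure against the point measure. I would organize this as: (i) after Blaschke--Petkantschin, write $\P(\Delta>t)$ as a spherical-plane average of a functional $\Phi_t(K\cap L)$ of the arc; (ii) express $\Phi_t$ of an arc of length $\ell$ explicitly as a one-dimensional integral, so that the average becomes $\E[\Phi_t(\sigma)]$ against the correct reweighted law; (iii) reconcile the reweighting with the definition of $\sigma$ given in the statement, picking up the constant $\frac{w_d}{2\pi}\kappa_{d-1}|\partial K|/|K|$. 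Finally differentiate in $t$: the term $d\kappa_d$-analogue $w_{d-1}$ arises from the $t=0$ boundary term (the total mass, i.e. $\P(\Delta>0)$-normalization) and the integral term arises from $\frac{d}{dt}\int_0^t(1-F_\sigma(s))\dd s = 1-F_\sigma(t)$, after an integration by parts that turns $\Phi_t$ into the stated cumulative form.

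I expect the main obstacle to be the precise form of the spherical Blaschke--Petkantschin formula and its constant: unlike the affine case quoted as~\cite[Theorem~7.2.7]{SW08}, the spherical version carries a Jacobian that is a nontrivial power of $\sin(\dist(\bx_0,\bx_1))$ (reflecting the fact that on the sphere the "distance from a point to a subsphere" has a sine rather than linear dependence), and getting the exponent and the leading constant right is essential for matching $(\sin t)^{d-2}$ and the coefficients $w_{d-1},\frac{w_d}{2\pi}$. A secondary subtlety is handling the arc $K\cap L$: since $K$ is spherically convex, $K\cap L$ is a single arc of $\Sd\cap L\cong\mathbb S^1$ of length $<\pi$, so $\Phi_t$ and the change of variables to $\sigma$ are clean; but one must be careful that for $t$ close to the spherical diameter of $K$ the arc-integral truncates correctly, which is exactly what produces the $\int_0^t(1-F_\sigma(s))\dd s$ rather than an unrestricted integral. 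Once the constants are pinned down, differentiation in $t$ is routine.
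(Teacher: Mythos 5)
Your plan follows essentially the same route as the paper's proof: apply the spherical Blaschke--Petkantschin formula with $k=2$, reduce the inner integral on each great circle $\Sd\cap L$ to one-dimensional integrals of $(\sin\phi)^{d-2}$ over the arc $K\cap L$, convert the resulting plane averages into integrals against $F_\sigma$ via the spherical Crofton normalization $\mu_{d,2}\{L\cap K\neq\emptyset\}=|\partial K|/\omega_{d-1}$, and differentiate in $t$. Two small corrections to your bookkeeping: the law of the plane defining $\sigma$ requires no chord-length reweighting (the arc functional absorbs the Blaschke--Petkantschin weight, so only the Crofton normalization enters), and the leading $\omega_{d-1}$ term is produced not by the $t=0$ normalization alone but by the additional Crofton-type identity $\int_{L\cap K\neq\emptyset}\alpha(K\cap L)\,\mu_{d,2}(\mathrm{d}L)=\frac{2\pi}{\omega_d}\,|K|$, which you will need as an extra input exactly as the paper uses it.
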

The proof is given in the next section.
As an application let us find the density of the distribution between two random points in a spherical cap.
\begin{corollary}\label{1944}
    Let $K$ be a spherical cap of spherical radius $r<\frac{\pi}{2}$. Then
\begin{align*}
     f_\Delta(t)=\omega_{d-1}\frac{(\sin t)^{d-2}}{|K|}\bigg(1-\frac{w_d}{2\pi}\kappa_{d-1}\frac{1}{|K|}\int_0^t \left(1 - \frac{(\cos r) ^2}{(\cos \frac{s}{2}) ^2}\right)^{\frac{d-2}{2}} \dd s\bigg).
\end{align*}
\end{corollary}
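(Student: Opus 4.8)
The plan is to specialize Theorem~\ref{1430} to the case when $K$ is a spherical cap of radius $r<\frac\pi2$, so the only real task is to compute the two geometric inputs $|K|$, $|\partial K|$ and, most importantly, the distribution function $F_\sigma(s)$ of the random chord length for a cap. Since the cap is rotationally symmetric about its center, I would first set up coordinates: write a point of $\Sd$ as $(\cos\theta)e_1 + (\sin\theta)u$ with $\theta\in[0,\pi]$ the polar angle from the center and $u\in\mathbb S^{d-2}$, so that $K=\{\theta\le r\}$. Then $|K| = \omega_{d-1}\int_0^r(\sin\theta)^{d-2}\dd\theta$ and $|\partial K| = \omega_{d-1}(\sin r)^{d-2}$. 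Plugging these into Theorem~\ref{1430} and using $\omega_{d-1}=(d-1)\kappa_{d-1}$ only cosmetically, the shape of the claimed formula already matches once we verify the integrand $1-F_\sigma(s) = \bigl(1-\tfrac{\cos^2 r}{\cos^2(s/2)}\bigr)^{(d-2)/2}$; note the $w_{d-1}$ in the theorem becomes $\omega_{d-1}$ pulled out front, which is consistent with the corollary's statement.

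The heart of the argument is therefore to show that, for a cap of radius $r$, the chord length $\sigma$ satisfies
\begin{align*}
    \mathbb P(\sigma > s) = \left(1 - \frac{\cos^2 r}{\cos^2(s/2)}\right)^{\frac{d-2}{2}}, \qquad 0\le s\le 2r,
\end{align*}
and $\sigma\le 2r$ always (the diameter of the cap). A random chord is $K\cap P$ for a uniform $2$-plane $P\in G_{d,2}$ meeting $K$. The key elementary fact is that a great circle (the intersection of $K$ with a $2$-plane through the origin) meeting the cap is itself a circle on the sphere, and its intersection with the cap $\{\theta\le r\}$ is a spherical arc whose length depends only on the \emph{spherical distance} $\rho$ from the center of $K$ to the great circle: if this distance is $\rho\in[0,r]$, then the half-length of the arc is $\arccos\bigl(\cos r/\cos\rho\bigr)$ (this is the standard right-spherical-triangle relation: in the triangle with legs $\rho$, half-chord $a$ and hypotenuse $r$ one has $\cos r=\cos\rho\cos a$). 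Hence $\sigma = 2\arccos(\cos r/\cos\rho)$ with $\rho$ the (random) distance from the cap center to $P$. Solving for $\rho$ gives $\cos\rho = \cos r/\cos(\sigma/2)$, and $\sigma>s \iff \cos\rho > \cos r/\cos(s/2) \iff \rho < \arccos(\cos r/\cos(s/2))$.

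It then remains to identify the distribution of $\rho$, the spherical distance from a fixed point to a uniform random great subsphere $S^{d-2}=P\cap\Sd$ conditioned on meeting the cap. By the invariance of the measure on $G_{d,2}$, the "pole" direction of $P$ (the $(d-2)$-plane orthogonal complement perspective is cleaner: a $2$-plane $P$ corresponds to its orthogonal $(d-2)$-plane, whose great subsphere is uniform) — concretely, $\rho$ is the angular distance from $e_1$ to a uniformly random $(d-2)$-dimensional great subsphere, equivalently $\sin\rho$ is distributed like the distance from $e_1$ to a uniform point on $\Sd$ projected appropriately. The cleanest route: the distance from a fixed point to a uniform random $(d-2)$-subsphere has the same law as the polar angle $\theta$ of a uniform random point on $\Sd$ would if... — more precisely, one shows that $\mathbb P(\rho \le \alpha) \propto \int_0^\alpha (\sin\phi)^{d-2}\,? $ does \emph{not} come out right, and the correct statement, obtainable from the spherical Blaschke--Petkantschin / coarea argument already used to prove Theorem~\ref{1430}, is that the density of $\rho$ on $[0,r]$ (after conditioning on hitting the cap) is proportional to $(\sin\rho)^{d-2}$ is again not it; the honest computation gives $\mathbb P(\rho>\alpha \mid \text{hits } K) = \bigl(1-\tfrac{\cos^2 r - \cos^2\alpha\cdot 0}{\cdots}\bigr)$ — I will pin this down by a direct change of variables. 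I expect \textbf{this last step — correctly computing the law of the distance from a point to a random great subsphere and matching constants — to be the main obstacle}; everything else is bookkeeping with the spherical Pythagorean theorem and substitution of $|K|,|\partial K|$ into Theorem~\ref{1430}. A sanity check at the end: as $s\uparrow 2r$ the integrand $\to 0$, and as $d$ and $r$ vary the total mass $\int_0^{2r} f_\Delta = 1$ should hold, which I would verify against Theorem~\ref{1430} directly rather than from the closed form.
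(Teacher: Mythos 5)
Your geometric reduction coincides with the paper's first step: by the spherical Pythagorean relation $\cos r=\cos\rho\,\cos(\sigma/2)$, the event $\{\sigma\geqslant s\}$ is the same as $\{\rho\leqslant\rho_s\}$, i.e.\ as $\{L\cap K_s\neq\emptyset\}$, where $K_s$ is the concentric cap of radius $\rho_s=\arccos\bigl(\cos r/\cos\frac{s}{2}\bigr)$. But the proposal breaks down exactly where you flag ``the main obstacle'': you never determine the law of $\rho$, and the interim identity you do assert, $\mathbb{P}(\sigma>s)=\bigl(1-\cos^2 r/\cos^2\frac{s}{2}\bigr)^{(d-2)/2}$, is false as stated --- at $s=0$ it gives $(\sin r)^{d-2}\neq 1$, whereas $\mathbb{P}(\sigma>0)=1$. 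The correct tail is $1-F_\sigma(s)=\bigl(\sin\rho_s/\sin r\bigr)^{d-2}$; if you substituted your unnormalized version together with $|\partial K|=\omega_{d-1}(\sin r)^{d-2}$ into Theorem~\ref{1430}, you would carry a spurious factor $(\sin r)^{d-2}$ and not recover the stated formula.

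The missing ingredient is not a new computation but the spherical Crofton formula \eqref{measure of planes}, already used in the proof of Theorem~\ref{1430}: for any spherical convex body $M$ one has $\mu_{d,2}\{L\cap M\neq\emptyset\}=|\partial M|/\omega_{d-1}$. Applying this to both $K_s$ and $K$ yields
\begin{align*}
1-F_\sigma(s)=\frac{\mu_{d,2}\{L\cap K_s\neq\emptyset\}}{\mu_{d,2}\{L\cap K\neq\emptyset\}}=\frac{|\partial K_s|}{|\partial K|},
\qquad
|\partial K_s|=\omega_{d-1}\Bigl(1-\tfrac{\cos^2 r}{\cos^2\frac{s}{2}}\Bigr)^{\frac{d-2}{2}},
\end{align*}
so that in Theorem~\ref{1430} the factor $|\partial K|$ cancels and the corollary follows; this cancellation is precisely why the corollary has $1/|K|$ rather than $|\partial K|/|K|$ in front of the integral. (Your tentative guess that the density of $\rho$ is proportional to $(\sin\rho)^{d-2}$ is indeed ``not it'': conditionally on hitting $K$, the distribution function of $\rho$ on $[0,r]$ is $(\sin\alpha/\sin r)^{d-2}$.) Until the law of $\rho$ --- equivalently the hitting measure of a cap --- is pinned down, the proof is incomplete, and the formula you planned to verify it against is the one that needs correcting.
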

\begin{proof}
It is straightforward to check that the spherical length of $K \cap L)$ is less than $s$  if an only if $L \cap K_s = \emptyset$, where $K_s$ is a spherical cap with the same center as $K$ and with spherical radius $\arccos\left({\cos r}/{\cos \frac{s}{2}}\right)$. 
Thus in view of~\eqref{measure of planes} from below, 
\begin{align*}
   1- F_{\sigma}(s)= \frac{\mu_{d,2}\{L \cap K_s \neq \emptyset\}}{\mu_{d,2}\{L \cap K \neq \emptyset\}}=\frac{|\partial K_s|}{|\partial K|},
\end{align*}
and applying Theorem~\ref{1430} together with
\begin{align*}
    {|\partial K_s|}={\omega_{d-1}} \left(\sin\arccos\left(\frac{\cos r}{\cos \frac{s}{2}}\right)\right)^{d-2} = \omega_{d-1}\left(1 - \frac{(\cos \alpha) ^2}{(\cos \frac{t}{2}) ^2}\right)^{\frac{d-2}{2}}
\end{align*}
concludes the proof.
\end{proof}
If $d$ is even, then $f_\Delta$ for a spherical cap can be expressed in terms of the  elementary trigonometric functions.
\begin{corollary}
    If $d=2m+2$, then under assumptions of Corollary~\ref{1944} we have
\begin{align*}
     f_\Delta&(t)=\omega_{d-1}\frac{(\sin t)^{d-2}}{|K|}-\frac{\omega_d\omega_{d-1}\kappa_{d-1}}{\pi}\frac{(\sin t)^{d-2}}{|K|^2}\tan \frac{t}{2}
     \\
     &\times\sum_{k = 0}^{m}(-1)^{m-k}\binom{m}{k}\frac{(2k - 2)!!}{(2k - 1)!!}(\cos r)^{2(m-k)}\left(1 + \sum_{l = 1}^{m - k - 1}\frac{(2l - 1)!!}{(2l)!!}\frac{1}{\left(\cos\frac{t}{2}\right)^{2l}}\right).
\end{align*}
\end{corollary}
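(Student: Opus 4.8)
The plan is to start from the integral representation in Corollary~\ref{1944} and evaluate the integral
\[
\int_0^t\left(1-\frac{(\cos r)^2}{(\cos\frac{s}{2})^2}\right)^{\frac{d-2}{2}}\dd s
\]
explicitly when $d=2m+2$, so that the exponent $\frac{d-2}{2}=m$ is a nonnegative integer. In that case the integrand is a polynomial in $(\cos\frac{s}{2})^{-2}$: expanding by the binomial theorem,
\[
\left(1-\frac{(\cos r)^2}{(\cos\frac{s}{2})^2}\right)^{m}=\sum_{j=0}^{m}\binom{m}{j}(-1)^j(\cos r)^{2j}\left(\cos\tfrac{s}{2}\right)^{-2j},
\]
so it suffices to compute $\int_0^t(\cos\frac{s}{2})^{-2j}\dd s$ for each $j=0,\dots,m$. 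Substituting $u=s/2$ reduces this to $2\int_0^{t/2}(\sec u)^{2j}\dd u$, a standard antiderivative. For $j=0$ this is just $t$; for $j\ge 1$ one uses the reduction formula for $\int(\sec u)^{2j}\dd u$, whose closed form is a polynomial in $\sec u$ and $\tan u$: concretely $\int_0^{t/2}(\sec u)^{2j}\dd u=\tan\frac t2\cdot P_j(\sec\frac t2)$ for an explicit polynomial $P_j$ whose coefficients involve the double factorials $(2l-1)!!/(2l)!!$. Reindexing the double sum (setting $k=m-j$) and collecting the $\tan\frac t2$ factor out front produces exactly the stated expression, with the inner sum $\sum_{l=1}^{m-k-1}\frac{(2l-1)!!}{(2l)!!}(\cos\frac t2)^{-2l}$ and the leading coefficient $\frac{(2k-2)!!}{(2k-1)!!}$ coming from the $j=0$ term of each reduction.

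Concretely I would proceed in four steps. \emph{Step 1:} record Corollary~\ref{1944} and substitute $d=2m+2$, so $\frac{d-2}{2}=m$. \emph{Step 2:} expand $(1-(\cos r)^2(\cos\frac s2)^{-2})^m$ by the binomial theorem and integrate term by term, reducing everything to the integrals $I_j(t):=\int_0^t(\cos\frac s2)^{-2j}\dd s$. \emph{Step 3:} establish by induction on $j$ (using integration by parts / the secant reduction formula) that for $j\ge1$,
\[
I_j(t)=2\tan\tfrac t2\left(\frac{(2j-2)!!}{(2j-1)!!}+\frac{(2j-2)!!}{(2j-1)!!}\sum_{l=1}^{j-1}\frac{(2l-1)!!}{(2l)!!}\frac{1}{(\cos\frac t2)^{2l}}\right),
\]
with $I_0(t)=t$; one has to be a little careful that the $j=1$ case (empty inner sum, coefficient $1$) matches $I_1(t)=2\tan\frac t2$. \emph{Step 4:} substitute back, change summation index from $j$ to $k=m-j$, pull out the common factor $(\sin t)^{d-2}/|K|^2$ and $\tan\frac t2$, and absorb the constants $\omega_d$, $\omega_{d-1}$, $\kappa_{d-1}$, $2$, $2\pi$ into the prefactor $\frac{\omega_d\omega_{d-1}\kappa_{d-1}}{\pi}$; the surviving first term $\omega_{d-1}(\sin t)^{d-2}/|K|$ is just the leading term of Corollary~\ref{1944} untouched.

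The main obstacle is purely bookkeeping: matching the double-factorial coefficients coming out of the iterated secant reduction formula to the precise combination $\frac{(2k-2)!!}{(2k-1)!!}\bigl(1+\sum_{l=1}^{m-k-1}\frac{(2l-1)!!}{(2l)!!}(\cos\frac t2)^{-2l}\bigr)$ appearing in the statement, and getting the sign $(-1)^{m-k}$ and the power $(\cos r)^{2(m-k)}$ aligned after the reindexing. There is no conceptual difficulty — it is a finite, elementary computation — but the convention for double factorials at small arguments ($0!!=(-1)!!=1$) and the empty-sum conventions need to be checked against the endpoint cases $k=m$ and $k=m-1$ to be sure the formula is literally correct. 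I would verify the final identity independently for $m=0$ (i.e.\ $d=2$) and $m=1$ (i.e.\ $d=4$) as a sanity check before declaring the induction complete.
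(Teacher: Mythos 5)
Your proposal is correct in method and essentially identical to the paper's own proof: binomial expansion of the integrand from Corollary~\ref{1944} for $d=2m+2$, reduction of $\int_0^t(\cos\frac s2)^{-2j}\dd s$ via the secant reduction formula with an induction giving the double-factorial closed form, then reindexing $k=m-j$ and absorbing constants. Your closing caution about the endpoint cases is well placed and worth carrying out: done literally, the $j=0$ term integrates to $t$ rather than a multiple of $\tan\frac t2$, and the reindexed coefficient is $\frac{(2(m-k)-2)!!}{(2(m-k)-1)!!}$ rather than $\frac{(2k-2)!!}{(2k-1)!!}$, so the displayed formula (and the paper's own final substitution, which makes the same slips) requires exactly the endpoint and index verification you flag, e.g.\ against the case $m=1$.
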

\begin{proof}
By the binomial theorem,
\begin{align*}
    \int\limits_0^t \left(1 - \frac{(\cos r) ^2}{(\cos \frac{s}{2}) ^2}\right)^{\frac{d-2}{2}} \dd s = \sum_{k = 0}^{m}(-1)^{m-k}\binom{m}{k}(\cos r)^{2(m-k)}\int\limits_0^t\frac{1}{(\cos \frac{s}{2})^{2(m-k)}} \dd s.
\end{align*}
Denote by $I_{2k}$ the indefinite integral of ${(\cos t)^{-2k}}$.
Applying the reduction formula we get 
\begin{equation*}
    (2k-1)I_{2k}(t) = \tan t\frac{1}{(\cos t)^{2k - 2}} + (2k - 2)I_{2k - 2}(t).
\end{equation*}
Using the fact that $I_2(t) = \tan t$,  we can derive by induction that
\begin{align*}
    I_{2k}(t) = \frac{(2k - 2)!!}{(2k - 1)!!}\tan t\left(1 + \sum_{l = 1}^{k-1}\frac{(2l - 1)!!}{(2l)!!}\frac{1}{(\cos t)^{2l}}\right).
\end{align*}

Therefore, 
\begin{align*}
    \int\limits_0^t &\left(1 - \frac{(\cos r) ^2}{(\cos \frac{s}{2}) ^2}\right)^{\frac{d-2}{2}} \dd s \\
    &= 2 \tan \frac{t}{2}\sum_{k = 0}^{m}(-1)^{m-k}\binom{m}{k}(\cos r)^{2(m-k)}\frac{(2k - 2)!!}{(2k - 1)!!}\left(1 + \sum_{l = 1}^{m - k - 1}\frac{(2l - 1)!!}{(2l)!!}\frac{1}{(\cos (\frac{t}{2})^{2l}}\right).
\end{align*}

\end{proof}

\section{Proof of Theorem~\ref{1430}}
The main ingredient of the proof is the following spherical Blaschke--Petkantchin formula: for any non-negative Borel function $f:(\mathbb{S}^{d-1})^k\to \mathbb{R}$ we have
\begin{align}
\label{blashke-petkanchin}
       \int\limits_{\left( \mathbb{S}^{d-1}\right)^k} &f(x_1,\dots,x_k)\lambda(\mathrm d x_1)\dots \lambda(\mathrm d x_k)
       \\\notag
       &= (k!)^{d-k}b_{d,k}\int\limits_{G_{d,k}}
       \int\limits_{(E \cap \mathbb{S}^{d-1})^{k}} f(x_1,\dots,x_k) |\conv(0,x_1,\dots,x_k)|^{d-k}
       \\\notag
       &\hphantom{k!b_{d,k}\int\limits_{G_{d,k}}
       \int\limits_{(E \cap \mathbb{S}^{d-1})^{k}}}\times\lambda_{L}( \mathrm d x_1)\dots \lambda_{E}(\mathrm d x_k)\mu_{d,k}(\mathrm d L),
\end{align}
where $\lambda,\lambda_E$ are the spherical Lebesgue measures on $\Sd, \Sd\cap L$ of dimensions $d-1, k-1$ respectively, $|\conv(0,x_1,\dots,x_k)|$ denotes the \emph{Euclidean} volume of the convex hull of $0,x_1,\dots,x_k$, and
\begin{equation*}
    b_{d,k} := \frac{\omega_{d-k+1}\cdots \omega_{d}}{\omega_{1}\cdots \omega_{k}}.
\end{equation*}
This formula is a special case of a more general result from~\cite{AZ91}.

Denote by $F_\Delta$ the distribution function of $\Delta(K)$.
By definition,
\begin{align*}
    % &F_r(x) = \frac{1}{|K|^2}\int\limits_{\left( S^{d-1}\right)^2}\mathbbm{1}[x_1, x_2 \in K]\mathbbm{1}[\alpha(x_1, x_2) < x] dx_1dx_2, \\
    F_{\sigma}(t) = \frac{\int\limits_{G_{d,2}}\mathbbm{1}[L \cap K \neq \{0\}]\mathbbm{1}[\alpha(K\cap L) < t] \mu_{d,2}(\mathrm dL)}{\mu_{d,2}\{L \in G_{d,2} \mid L\cap K  \neq \emptyset\}},
\end{align*}
\begin{align*}
    1 - F_\Delta(t) = \frac{1}{|K|^2}\int\limits_{\left( S^{d-1}\right)^2}\mathbbm{1}[x_1, x_2 \in K]\mathbbm{1}[\alpha(x_1, x_2) \geqslant t] \lambda(\mathrm d x_1)\lambda(\mathrm d x_2),
\end{align*}
where by $\alpha(x_1, x_2)$ and $\alpha(K\cap L)$ we denote the spherical distance between points $x_1,x_2$ and the spherical length of the chord $K\cap L$.

First let us evaluate $F_\Delta$. Using \eqref{blashke-petkanchin} leads to
\begin{align*}
    &\int\limits_{\left( S^{d-1}\right)^2}\mathbbm{1}[x_1, x_2 \in K]\mathbbm{1}[\alpha(x_1, x_2) \geqslant t] \dd x_1\dd x_2  \\
    & = 2^{d-2}b_{d,2}\int\limits_{G_{d,2}}\int\limits_{\left(S^{d-1}\cap L\right)^2} \mathbbm{1}[x_1, x_2 \in K, \alpha(x_1, x_2) \geqslant t] |\mathrm{conv}(0, x_1, x_2)|^{d-2} 
    \\
    &\hphantom{ 2^{d-2}b_{d,2}\int\limits_{G_{d,2}}\int\limits_{\left(L\cap S^{d-1}\right)^2} }\times\lambda_{L}(\mathrm{d}x_1)\lambda_{L}(\mathrm{d}x_2)\mu_{d,2}(\mathrm{d}L) \\
    & =2^{d-2}b_{d,2}\int\limits_{\alpha(K\cap L) \geqslant t}\int\limits_{\left(K \cap L\right)^2}\mathbbm{1}[\alpha(x_1, x_2 \geqslant t)]|\mathrm{conv}(0, x_1, x_2)|^{d-2}
    \\
    &\hphantom{=2^{d-2}b_{d,2}\int\limits_{\alpha(K\cap L) \geqslant x}\int\limits_{\left(K \cap L\right)^2}}\times\lambda_{L}(\mathrm{d}x_1)\lambda_{L}(\mathrm{d}x_2)\mu_{d,2}(\mathrm{d}L).
\end{align*}
Using the fact that $|\mathrm{conv}(0, x_1, x_2)| = \frac{1}{2}\sin(\alpha(x_1, x_2))$, we get
\begin{align*}
    &\int\limits_{\left(K \cap L\right)^2}\mathbbm{1}[\alpha(x_1, x_2 \geqslant t)]|\mathrm{conv}(0, x_1, x_2)|^{d-2} \lambda_{L}(\mathrm{d}x_1)\lambda_{L}(\mathrm{d}x_2)  \\
    &=\int\limits_{0}^{\alpha(K \cap L)}\int\limits_{0}^{\alpha(K \cap L)}\mathbbm{1}[|\phi_1 - \phi_2| \geqslant t]\left(\frac{1}{2}\sin(|\phi_1 - \phi_2|)\right)^{d-2} \dd \phi_1\dd \phi_2 \\
    &= \frac{1}{2^{d-3}}\int\limits_t^{\alpha(K \cap L)}\int\limits_0^{\phi_1 - t}\sin(\phi_1 - \phi_2)^{d-2} \dd \phi_2\dd \phi_1 = \frac{1}{2^{d-3}}\int\limits_t^{\alpha(K \cap L)}\int\limits_t^{\phi_1}\sin(\phi_2)^{d-2} \dd \phi_2\dd \phi_1.
\end{align*}

Hence, 
$$
    1 - F_{\Delta}(t) = \frac{2b_{d,2}}{|K|^2}\int\limits_{\alpha(K \cap L) \geqslant t}\int\limits_t^{\alpha(E)}\int\limits_t^{\phi_1}\sin(\phi_2)^{d-2} \dd \phi_2 \dd \phi_1.
$$
Let us evaluate the inner double integral. For an integer $n\geq 1$ we have
\begin{align*}
    \sin^n t = \left(\frac{e^{it} - e^{-it}}{2i}\right)^n &= \sum_{k = 0}^n \binom{n}{k}\left(\frac{e^{it}}{2i}\right)^k\left(\frac{-e^{-it}}{2i}\right)^{n-k}
    \\
    &= \frac{1}{(2i)^n}\sum_{k = 0}^n (-1)^{n-k}\binom{n}{k}e^{it(2k - n)}.
\end{align*}
From that we derive an indefinite integral for $ \sin^{d-2}t$,
\begin{align*}
    F(t) :=
    \begin{cases}
         \frac{(-1)^m}{2^{2m-1}}\left[(-1)^m\binom{2m}{m}t + \sum_{k = 0}^{m-1}(-1)^k\binom{2m}{k}\frac{1}{2m - 2k}\sin((2m - 2k)t)\right], \;\; d-2 = 2m, \\
         \frac{(-1)^{m+1}}{2^{2m}}\left[\sum_{k = 0}^{m}(-1)^k\binom{2m+1}{k}\frac{1}{2m + 1 - 2k}\cos((2m + 1 - 2k)t)\right], \;\; d-2 = 2m + 1,
    \end{cases}
\end{align*}
and then an indefinite integral for $F(t)$,
\begin{align*}
    G(t) =
    \begin{cases}
    \frac{(-1)^{m+1}}{2^{2m-1}}\left[(-1)^m\binom{2m}{m}\frac{t^2}{2} + \sum_{k = 0}^{m-1}(-1)^{k}\binom{2m}{k}\frac{1}{(2m - 2k)^2}\cos((2m - 2k)t)\right], \;\; d-2 = 2m, \\
    \frac{(-1)^{m+1}}{2^{2m}}\left[\sum_{k = 0}^{m}(-1)^k\binom{2m+1}{k}\frac{1}{(2m + 1 - 2k)^2}\sin((2m + 1 - 2k)t)\right], \;\; d-2 = 2m + 1.
    \end{cases}
\end{align*}
It follows that
\begin{align*}
    \int\limits_t^{\alpha(K \cap L)}\int\limits_t^{\phi_1}\sin(\phi_2)^{d-2} \dd \phi_2\dd \phi_1 &= \int\limits_t^{\alpha(K \cap L)} F(\phi_1) \dd \phi_1  - F(t)(\alpha(K \cap L) - t) \\
    &= G(\alpha(K \cap L)) - F(t)\alpha(K \cap L) - G(t) + tF(t).
\end{align*}
and finally, we have
\begin{align}\notag
    1 - &F_{\Delta}(t) = \frac{2b_{d,2}}{|K|^2} \left[\int\limits_{\alpha(K \cap L)\geqslant t}\left(G(\alpha(K \cap L))\right) \mu_{d,2}(\mathrm{d}L) -F(t)\int\limits_{\alpha(K \cap L) \geqslant t}\alpha(K \cap L)\mu_{d,2}(\mathrm{d}L) \right. \\\label{whole}
     &+\left. \left( tF(t) - G(t) \right)\int\limits_{\alpha(K \cap L) \geqslant t}\mu_{d,2}(\mathrm{d}L)\right] =:\frac{2b_{d,2}}{|K|^2}\left[ I_1(t) -  F(t) I_2(t) + \left( tF(t) - G(t) \right)I_3(t)\right]
\end{align}
By spherical Crofton's formula~\cite[Section~6.5]{SW08},
\begin{equation}\label{measure of planes}
    \mu_{d,2}\{L \in G_{d,2} \mid L\cap K  \neq \emptyset\} = \frac{|\partial K|}{\omega_{d-1}}.
\end{equation}
Therefore it follows from the definition of $F_{\sigma}$, that 
\begin{equation}\label{part 3}
    I_3(t) = \frac{|\partial K|}{\omega_{d-1}} \left(1 - F_{\sigma}(t)\right).
\end{equation}

To calculate $I_1(t)$ and $I_2(t)$ we will need the following statement:
\begin{lemma}
Let $R: [0, \pi] \rightarrow \mathbb{R}$ be a continuous function. Then
\begin{equation}\label{lemma 1}
    \int\limits_{\alpha(K \cap L) < t} R(\alpha(K \cap L))\mu_{d,2}(\mathrm{d}L) =\frac{|\partial K|}{\omega_{d-1}}\int\limits_0^t R(s) \mathrm{d}F_{\sigma}(s).
\end{equation}
\end{lemma}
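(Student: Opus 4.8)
The plan is to recognise the left-hand side of~\eqref{lemma 1} as the integral of $R$ against the image of $\mu_{d,2}$, restricted to the planes that meet $K$, under the map $\Phi\colon L\mapsto\alpha(K\cap L)$, and to identify that image measure with $\frac{|\partial K|}{\omega_{d-1}}\,\mathrm dF_\sigma$.

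First I would let $\nu$ be the finite measure on $G_{d,2}$ defined by $\nu(A):=\mu_{d,2}\big(A\cap\{L: L\cap K\ne\{0\}\}\big)$; by the spherical Crofton formula~\eqref{measure of planes} its total mass equals $|\partial K|/\omega_{d-1}$. Since $K$ is a spherical convex body, the chord length $\alpha(K\cap L)$ depends measurably (indeed semicontinuously) on $L$, so the push-forward $\Phi_*\nu$ is a well-defined finite Borel measure on $[0,\pi]$. By the definition of $F_\sigma$ recalled above, combined with~\eqref{measure of planes}, for every $t\in[0,\pi]$
\[
    (\Phi_*\nu)\big([0,t)\big)=\int\limits_{G_{d,2}}\mathbbm{1}[L\cap K\ne\{0\}]\,\mathbbm{1}[\alpha(K\cap L)<t]\,\mu_{d,2}(\mathrm dL)=\frac{|\partial K|}{\omega_{d-1}}\,F_\sigma(t).
\]
The half-open intervals $[0,t)$ form a $\pi$-system generating the Borel $\sigma$-algebra of $[0,\pi]$, and the two finite measures $\Phi_*\nu$ and $\frac{|\partial K|}{\omega_{d-1}}\,\mathrm dF_\sigma$ have the same total mass, hence they coincide: $\Phi_*\nu=\frac{|\partial K|}{\omega_{d-1}}\,\mathrm dF_\sigma$.

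It then remains to apply the change-of-variables formula for push-forwards. For the bounded Borel function $s\mapsto R(s)\,\mathbbm{1}_{[0,t)}(s)$ (here $R$ is continuous, hence bounded on $[0,\pi]$) we get
\[
    \int\limits_{\alpha(K\cap L)<t}R(\alpha(K\cap L))\,\mu_{d,2}(\mathrm dL)=\int\limits_{[0,\pi]}R(s)\,\mathbbm{1}_{[0,t)}(s)\,(\Phi_*\nu)(\mathrm ds)=\frac{|\partial K|}{\omega_{d-1}}\int\limits_0^t R(s)\,\mathrm dF_\sigma(s),
\]
where on the left we used that the integrand vanishes off the support of $\nu$, so that $\mu_{d,2}$ may be written in place of $\nu$, and that $\mathbbm{1}[\alpha(K\cap L)<t]=\mathbbm{1}_{[0,t)}(\Phi(L))$. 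This is precisely~\eqref{lemma 1}.

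Essentially everything here is routine measure theory; the only points deserving a remark are the measurability of $L\mapsto\alpha(K\cap L)$ on $G_{d,2}$ and the fact that $F_\sigma$ need not be continuous (a family of $2$-planes of positive $\mu_{d,2}$-measure can cut $K$ in chords of equal length), so $\mathrm dF_\sigma$ must be understood as the associated Lebesgue--Stieltjes measure. Neither causes trouble above, since we never differentiate $F_\sigma$; the structural observation carrying the whole content of the lemma is simply that $F_\sigma$ is, up to the constant $\omega_{d-1}/|\partial K|$, the distribution function of the random variable $\alpha(K\cap L)$ under the normalized measure $\mu_{d,2}(\,\cdot\mid L\cap K\ne\emptyset)$.
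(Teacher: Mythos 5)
Your proof is correct, but it follows a genuinely different route from the paper. The paper works with $H(t)=\int_{\alpha(K\cap L)<t}R(\alpha(K\cap L))\,\mu_{d,2}(\mathrm dL)$, applies a mean-value argument to the difference quotient $\frac{H(t+\Delta t)-H(t)}{\Delta t}$ (pulling $R(\theta)$ out for some $\theta\in[t,t+\Delta t]$ by continuity), and then lets $\Delta t\to 0$, invoking the almost-everywhere differentiability of $F_\sigma$ to write $\mathrm dH(t)=\frac{|\partial K|}{\omega_{d-1}}R(t)\,\mathrm dF_\sigma(t)$ and integrate from $0$. You instead identify the push-forward of $\mu_{d,2}$ (restricted to planes meeting $K$) under $L\mapsto\alpha(K\cap L)$ with $\frac{|\partial K|}{\omega_{d-1}}\,\mathrm dF_\sigma$ via the $\pi$-system of intervals $[0,t)$ together with \eqref{measure of planes}, and then apply the change-of-variables theorem for image measures. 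Your version is arguably the more robust one: it treats $\mathrm dF_\sigma$ as a Lebesgue--Stieltjes measure from the outset and so needs no regularity of $F_\sigma$ beyond monotonicity, whereas the paper's differentiation argument, taken literally, passes through pointwise a.e.\ derivatives and is silent about possible jumps or singular parts of $F_\sigma$ (harmless in the intended applications, where $F_\sigma$ is nice, but a gap in full generality). The paper's argument, on the other hand, is shorter and stays at the level of elementary calculus, which matches the informal tone of the rest of the proof of Theorem~\ref{1430}. One small point of bookkeeping in your write-up: your uniqueness step uses both the $\pi$-system $\{[0,t)\}$ and equality of total masses; the latter deserves the one-line remark that $F_\sigma(\pi)=1$ up to the convention at the endpoint (chords of a line-free spherical body have length strictly less than $\pi$), but this is cosmetic.
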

\begin{proof}
Consider the function $H(t) = \int\limits_{\alpha(K \cap L) < t} R(\alpha(K \cap L))\mu_{d,2}(\mathrm{d}L)$. We have
\begin{align*}
    &\frac{H(t + \Delta t) - H(t)}{\Delta t} = \frac{1}{\Delta t}\int\limits_{t \leqslant \alpha(K \cap L) < t + \Delta t}R(\alpha(K \cap L))\mu_{d,2}(\mathrm{d}L) \\
    &\quad= R(\theta)\mu_{d,2}\{L \in G_{d,2} \mid L\cap K  \neq \emptyset\}\frac{(F_{\sigma}(t + \Delta t) - F_{\sigma}(t))}{\Delta t}
\end{align*}
for some $\theta \in [t, t + \Delta t]$.

Letting $\Delta t$ to $0$ and using continuity of $R$ and almost everywhere differentiability of $F_{\sigma}$ along with~\eqref{measure of planes} we obtain
\begin{align*}
    \mathrm{d}H(t) = \frac{|\partial K|}{\omega_{d-1}}R(t)\mathrm{d}F_{\sigma}(t),
\end{align*}
and since $H(0) = 0$,
\begin{align*}
    H(t) = \frac{|\partial K|}{\omega_{d-1}}\int\limits_0^t R(s)
\mathrm{d}F_{\sigma}(s).
\end{align*}
\end{proof}

Substituting $x = 0$ in \eqref{whole} gives

\begin{align}\label{G average}
    \int\limits_{L \cap K \neq \emptyset}\left(G(\alpha(K \cap L)) \right)\mu_{d,2}(\mathrm{d}L) &= \frac{|K|^2}{2b_{d,2}} + F(0)\int\limits_{L \cap K \neq \emptyset}\alpha(K \cap L)\mu_{d,2}(\mathrm{d}L)  \\ \notag
    &+ G(0)\frac{|\partial K|}{\omega_{d-1}}. 
\end{align}

Again, by spherical Crofton's formula~\cite[Section~6.5]{SW08},
\begin{equation}\label{angle average}
    \int\limits_{L \cap K \neq \emptyset}\alpha(K \cap L)\mu_{d,2}(\mathrm{d}L)=\frac{2\pi}{\omega_d}|K|.
\end{equation}
Therefore applying \eqref{lemma 1}, \eqref{G average} and \eqref{angle average} leads to
\begin{align}\notag
    I_1(t) &= \int\limits_{L \cap K \neq \emptyset}\left(G(\alpha(K \cap L))\right)\mu_{d,2}(\mathrm{d}L) - \frac{|\partial K|}{\omega_{d-1}}\int\limits_0^t G(s)\mathrm{d}F_{\sigma}(s) \\\notag
    & = \frac{|K|^2}{2b_{d,2}} + F(0)\frac{2\pi}{\omega_d}|K| + G(0)\frac{|\partial K|}{\omega_{d-1}} \\ \label{part 1}
    &- \frac{|\partial K|}{\omega_{d-1}}\left[G(t)F_{\sigma}(t) - \int\limits_0^tF(s)F_{\sigma}(s)\dd s \right]
\end{align}

and
\begin{align}\notag
    I_2(t) &= \int\limits_{L \cap K \neq \emptyset}\alpha(K \cap L)\mu_{d,2}(dL) - \frac{|\partial K|}{\omega_{d-1}}\int\limits_0^t s\mathrm{d}F_{\sigma}(s) \\ \notag
    &= \frac{2\pi}{\omega_d}|K| - \frac{|\partial K|}{\omega_{d-1}}\left(tF_{\sigma}(t) - \int\limits_0^t F_{\sigma}(s)\dd s\right)  \\ \label{part 2}
    &= \frac{2\pi}{\omega_d}|K| - \frac{|\partial K|}{\omega_{d-1}}\left(tF_{\sigma}(t) - \int\limits_0^t F_{\sigma}(s)\dd s\right).
\end{align}

Substituting \eqref{part 1}, \eqref{part 2} and \eqref{part 3} in \eqref{whole} we get:
\begin{align*}
    1 &- F_{\Delta}(t) = \frac{2b_{d,2}}{|K|^2}\Bigg[\frac{|K|^2}{2b_{d,2}} + F(0)\frac{2\pi}{\omega_d}|K| + G(0)\frac{|\partial K|}{\omega_{d-1}}\\
    &+ \frac{|\partial K|}{\omega_{d-1}}\int\limits_0^t F(s)F_{\sigma}(s)\dd s - \frac{|\partial K|}{\omega_{d-1}}F(t)\int\limits_0^tF_{\sigma}(s)\dd s - F(t) \frac{2\pi}{\omega_d}|K| \\
    & + \left(tF(t) - G(t)\right)\frac{|\partial K|}{\omega_{d-1}}\Bigg].
\end{align*}

Therefore, 
\begin{align*}
   F_{\Delta}(t) &= \frac{2b_{d,2}}{|K|^2}\Bigg[\left(F(t) - F(0)\right) \frac{2\pi}{\omega_d}|K|  +  \frac{|\partial K|}{\omega_{d-1}} \Bigg(G(t) - tF(t) - G(0) \\
   &  + \int\limits_0^t\left(F(t) - F(s)\right)F_{\sigma}(s)\dd s\Bigg)\Bigg].
\end{align*}

Differentiating the last equation, we arrive at
\begin{align*}
    f_{\Delta}(t) &= \frac{2b_{d,2}}{|K|^2} \Bigg[F'(t)\frac{2\pi}{\omega_d}|K| \\
    &+  \frac{|\partial K|}{\omega_{d-1}}\left(G'(t) - F(t) - tF'(t) + F'(t)\int\limits_0^t F_{\sigma}(s) \dd s\right) \Bigg] \\
    &= \frac{2b_{d,2}}{|K|^2} \Bigg[(\sin t)      ^{d-2}\frac{2\pi}{\omega_d}|K| -  \frac{|\partial K|}{\omega_{d-1}} (\sin t)^{d-2}\int\limits_0^t \left(1 -F_{\sigma}(s)\right) \dd s \Bigg].
\end{align*}
To conclude the proof it remains to note that 
\begin{align*}
    b_{d,2}=\frac{\omega_d\omega_{d-1}}{4\pi}.
\end{align*}

\section{Acknowledgments}
The work was supported by the Foundation for the Advancement of Theoretical Physics and Mathematics ``BASIS''.

The work of TM was supported by Ministry of Science and Higher Education of the Russian Federation, agreement  075-15-2019-1619.

\bibliographystyle{plain}
\bibliography{bib}

\begin{thebibliography}{1}

\bibitem{AO16}
N.~Aharonyan and V.~Ohanyan.
\newblock Moments of the distance between two random points.
\newblock {\em Model. Artif. Intell.}, 10(2):64--70, 2016.

\bibitem{AZ91}
E.~Arbeiter and M.~Z\"{a}hle.
\newblock Kinematic relations for {H}ausdorff moment measures in spherical
  spaces.
\newblock {\em Math. Nachr.}, 153:333--348, 1991.

\bibitem{gC67}
G.~D. Chakerian.
\newblock Inequalities for the difference body of a convex body.
\newblock {\em Proc. Amer. Math. Soc.}, 18:879--884, 1967.

\bibitem{mC85}
M.~Crofton.
\newblock Probability.
\newblock In {\em Encyclopaedia Brittanica}, volume~19, pages 758--788.
  Encyclopedia Britannica Inc, 9th edition, 1885.

\bibitem{hH52}
H.~Hadwiger.
\newblock Ueber zwei quadratische {D}istanzintegrale f\"{u}r {E}ik\"{o}rper.
\newblock {\em Arch. Math. (Basel)}, 3:142--144, 1952.

\bibitem{jK69}
J.~F.~C. Kingman.
\newblock Random secants of a convex body.
\newblock {\em J. Appl. Probability}, 6:660--672, 1969.

\bibitem{tM19}
T.~Moseeva.
\newblock Random sections of convex bodies.
\newblock {\em Zap. Nauchn. Sem. S.-Peterburg. Otdel. Mat. Inst. Steklov.
  (POMI)}, 486(28):190--199, 2019.

\bibitem{SW08}
R.~Schneider and W.~Weil.
\newblock {\em Stochastic and integral geometry}.
\newblock Probability and its Applications (New York). Springer-Verlag, Berlin,
  2008.

\end{thebibliography}

\end{document}